\newcommand{\C}{\mathbb C}
\newcommand{\Z}{\mathbb Z}
\newcommand{\N}{\mathbb N}
\newcommand{\Q}{\mathbb Q}
\newcommand{\sma}{\left(\begin{array}}
\newcommand{\fma}{\end{array}\right)}
\newtheorem{lem}{Lemma}
\newtheorem{co}[lem]{Corollary}
\newtheorem{thm}[lem]{Theorem}
\newtheorem{prop}[lem]{Proposition}
\newenvironment{proof}{\textbf{Proof.}}{\newline\hspace*{\fill}{$\Box$}\\}
\begin{document}
\title{Tubular groups, 1-relator groups and non positive curvature}
\author{J.\,O.\,Button}

\newcommand{\Address}{{
  \bigskip
  \footnotesize

\textsc{Selwyn College, University of Cambridge,
Cambridge CB3 9DQ, UK}\par\nopagebreak
  \textit{E-mail address}: \texttt{j.o.button@dpmms.cam.ac.uk}
}}

\date{}
\maketitle
\begin{abstract}
We show (using results of Wise and of Woodhouse) that
a tubular group is always virtually special (meaning that it
has a finite index subgroup embedding in a RAAG)
if the underlying
graph is a tree. We also adapt Gardam and Woodhouse's
argument on tubular groups which double cover 1-relator groups to
show there exist 1-relator groups which are CAT(0) but not
residually finite.
\end{abstract}
\begin{center}
2010 {\it Mathematics Subject Classification:}\\Primary 20F65, 20F67; 
Secondary 20E08
\end{center}
\hfill\\
{\it Keywords}: tubular group, graph of groups, non positively curved\\

\section{Introduction}

The usual notion of a discrete group $G$ having non positive curvature
is taken to be that it is CAT(0); namely it acts geometrically
(properly and cocompactly by isometries) on a CAT(0) metric space.
This certainly has some important consequences for the structure
of the group $G$ (see \cite{bh} Part III Chapter $\Gamma$
Theorem 1.1), for instance $G$ will be finitely presented.
However Wise's example in \cite{wsnh}
of a CAT(0) group which is not even Hopfian, thus not residually finite
nor linear over any field, shows that being CAT(0) is not a sufficient
condition to be well behaved in terms of abstract group theoretic
properties.

In recent years though, a stronger geometric notion has emerged,
namely that of acting geometrically not just on a CAT(0) space
but on a CAT(0) cube complex. Indeed because of the pioneering work
of Wise and Agol (\cite{hlws}, \cite{wsbig}, \cite{ag}),
if $G$ is also word hyperbolic then it is virtually special.
Here our definition of a virtually special group $G$ will be that
it has a finite index subgroup $H$, written $H\leq_f G$, where $H$ embeds
into a right angled
Artin group (a RAAG, which we always assume is finitely generated).
Being virtually special implies very strong group theoretic
consequences for $G$: indeed $G$ will inherit any property held
by all RAAGs and preserved under taking subgroups and finite index
supergroups. In particular $G$ will be linear not just over $\C$
but even over $\Z$ (in fact there are various groups only known to be linear
because they are virtually special, such as word hyperbolic free by
cyclic groups as shown in \cite{hwgen}), as well as
being virtually biorderable, virtually residually torsion free nilpotent
and virtually residually finite rational solvable for instance.

This interplay between the geometric and group theoretic properties
of $G$ is therefore very powerful if $G$ is word hyperbolic. However
in this paper our focus will be on groups which, although always
finitely presented (in fact all groups considered here will have a
presentation of deficiency 1), will usually contain $\Z\times\Z$.
The problem here is that in the absence of word hyperbolicity,
a finitely presented group
being virtually special neither implies nor
is implied by acting geometrically on a CAT(0) cube complex. To see this,
one way round follows from the finitely presented simple groups of
Burger and Mozes whereas there are finitely presented subgroups $H$ of the
RAAG $F_2\times F_2\times F_2$, so are virtually special, but
which do not have the correct finiteness properties even to be CAT(0) 
(\cite{bh} Chapter III.$\Gamma$ Section 5).

In order to investigate this further, we can restrict to classes of groups
which are non word hyperbolic but whose structure gives us a better chance
of determining which groups in the class behave well geometrically and
which ones have good group theoretic properties. The first class we will
consider here are the tubular groups: namely the fundamental group
${\cal G}(\Gamma)$ of a finite graph $\Gamma$
of groups with all vertex groups isomorphic to $\Z^2$ and all edge groups
isomorphic to $\Z$. Of course tubular groups can never be word hyperbolic
(and indeed are often not even relatively hyperbolic)
but they have already been shown in other papers to
possess a range of interesting behaviours.
For instance Wise's non Hopfian CAT(0) group
mentioned above is tubular but is not virtually special nor acts
geometrically on a CAT(0) cube complex. Indeed in \cite{wstr} Wise showed
that only a very few tubular groups act geometrically on a CAT(0) cube
complex (\cite{wstr} Corollary 5.10) and these few groups are all virtually 
special (\cite{wstr} Corollary 5.9). Furthermore a virtually special
tubular group will be CAT(0) (by \cite{wstr} Lemma 4.4) but certainly
not vice versa.

In particular if we want our tubular group $G$ to be well behaved from
a group theoretic point of view then showing it is CAT(0) will not
be enough, but showing it is virtually special certainly will be.
Thus in Section 2 we look for tubular groups which are virtually special
but which do not act geometrically on a CAT(0)
cube complex. 
The downside here is that confirming directly that a group $G$ is virtually
special can be very involved, because one usually needs Wise's machinery
to show not only that $G$ has a finite index subgroup $H$ which is the
fundamental group of a non positively curved cube complex but also that
the complex is special. However Woodhouse introduces in \cite{wdvs} a
technique specifically for tubular groups which is used to show
that a tubular group
is virtually special if and only if it acts freely on some locally finite
CAT(0) cube complex. This is achieved by adapting Wise's equitable sets
condition in \cite{wstr} (which is equivalent to the group having a
free action on some CAT(0) cube complex) and imposing extra constraints
on these equitable sets which imply that the group is virtually
special. Therefore it seems appropriate to see for which tubular groups
this result can be made to work. We show in Theorem \ref{vspe} that
Woodhouse's criterion is satisfied for any tubular group defined by
$({\cal G},\Gamma)$ where $\Gamma$ is a tree. Thus this gives a new
class of virtually special groups and we can conclude that they all possess
our strong group theoretic properties above. In particular these groups are
all linear over $\Z$ even though linearity of this family was
not previously known.

We finish in Section 3 by looking at 1-relator groups. Again we are
interested in the family of non word hyperbolic 1-relator groups
(it is open as to whether every word hyperbolic 1-relator group
is virtually special or acts geometrically on a CAT(0) cube complex
or even is CAT(0)). Here we only examine 2-generator 1-relator
groups, though we note the very recent work in \cite{lowil}
which, on combining Theorem 1.3, Conjecture 1.6, Corollary 1.7
and Conjecture 1.8,
would say that a 1-relator group with at least 3 generators 
is either word hyperbolic or it is hyperbolic relative to a
2-generator 1-relator group.

A well known source of 1-relator groups with bad behaviour, both
group theoretic and geometric, are the famous Baumslag - Solitar
groups $BS(m,n)=\langle a,b|ba^mb^{-1}=a^n\rangle$.
More precisely, those groups where $|m|=|n|$ have
$F_k\times\Z$ as a finite index subgroup and so are actually well behaved
in terms of non positive curvature (though they are obstructions to a
group being word hyperbolic). However those
Baumslag - Solitar groups where $|m|\neq |n|$ act as obstructions
even for non positively curved behaviour and we will refer to them
as unbalanced or non Euclidean Baumslag - Solitar groups. Indeed
a group containing any subgroup
isomorphic to an unbalanced Baumslag - Solitar group cannot 
be CAT(0), let alone act geometrically on a CAT(0) cube complex, nor
can it be linear over $\Z$ and it will certainly not be virtually
special. This then begs the natural question: given a 1-relator
group with no unbalanced Baumslag - Solitar subgroup, must it
be well behaved geometrically or group theoretically?

This was open until 
the construction in \cite{gw} of Gardam and
Woodhouse which obtained a very close relationship between certain
2-generator 1-relator groups and certain tubular groups. More
precisely, they consider the particular class of tubular groups
\[G_{p,q}=\langle a,b,s,t|[a,b]=1,sa^qs^{-1}=a^pb,ta^qt^{-1}=a^pb^{-1}
\rangle\]
for $p,q$ positive integers which (when $p>q$) are 
known as the snowflake groups introduced in \cite{brbr}.
In \cite{gw} it is shown that the group $G_{p,q}$
is an index 2 subgroup of a 2-generator 1-relator
group $R_{p,q}$. This allows them to transfer various group theoretic and
geometric properties (or the lack thereof)
from the tubular group to the 1-relator group. In particular 
they show that for $p\geq q$ the resulting group $R_{p,q}$ is not CAT(0).
However $R_{p,q}$ never contains 
an unbalanced Baumslag - Solitar subgroups, so this is the
first example of a 1-relator group which is not well behaved
geometrically even though it contains no unbalanced Baumslag - Solitar
subgroups. 

In Section 3 we look at these families $R_{p,q}$ and $G_{p,q}$
but vary the parameters so that $q>p$, whereupon \cite{gg} established
that $R_{p,q}$ is a CAT(0) group.
This allows us in Theorem \ref{hop} to give the first
examples of 1-relator groups which are CAT(0) but not residually finite.
We do this by showing that when $p=1$ and $q\geq 3$ is odd, the
resulting tubular subgroup $G_{p,q}$ is non Hopfian using a very
similar argument to Wise in \cite{wsnh}, which itself is an adaptation
of the original argument by Baumslag and Solitar. Consequently $G_{p,q}$
and $R_{p,q}$ are not residually finite. In fact in Theorem \ref{resf}
we adapt this argument, along with a result in \cite{kim},  to work out
exactly when the group $G_{p,q}$ (equivalently $R_{p,q}$) is residually
finite (here we can take $p,q$
to be any non zero integers): namely when $q$ divides $2p$. Thus on
regarding ``well behaved geometrically'' to mean CAT(0) and
``well behaved group theoretically'' to mean residually finite, we
have 1-relator groups $R_{p,p}$ which are well behaved group
theoretically but not geometrically, 1-relator groups $R_{p,3p}$
which are well behaved geometrically but not group theoretically,
and (for $p>3$) 1-relator groups $R_{p,p-1}$ which are neither,
all without unbalanced Baumslag - Solitar subgroups. This construction
also allows us to answer Question 20 in \cite{meor}, namely (for
$p$ even) the 2-generator 1-relator group $R_{p,p/2}$
is residually finite but has no finite index subgroup which is an
ascending HNN extension of a finitely generated free group.

\section{Virtually special tubular groups from trees}
If we are given a tubular group ${\cal G}(\Gamma)$ where $\Gamma$
is a tree then we build our group using repeated amalgamations
over $\Z$ and it is straightforward to see, for instance by applying
\cite{bh} Chapter II Corollary 11.19, that our tubular group will
be CAT(0). This is taken further in \cite{wstr} which establishes
other results for tubular groups.
The key definition there is that of an equitable set for $({\cal G},\Gamma)$.
This is defined to be a choice at each vertex $v$ in $\Gamma$
of a finite
number of closed curves $c^{(v)}_1,\ldots ,c^{(v)}_{m(v)}$ on a torus
$S^1\times S^1$ placed at $v$, which therefore can be regarded as
elements of $\pi_1(S^1\times S^1)=G_v\cong \Z^2$, so that on taking
any edge $e$ with endpoints $e_\pm$ and edge group $\langle z_e\rangle$
embedding in $G_{e_+}$ as $z_{e_+}\in\Z^2\setminus\{(0,0)\}$ and in $G_{e_-}$
as $z_{e_-}$, then the sum of the (algebraic, unsigned) intersection
numbers
\[\#[c^{(e_+)}_1,z_{e_+}]+\ldots +\#[c^{(e_+)}_{m(e_+)},z_{e_+}]\]
at $e_+$ is equal to the sum
\[\#[c^{(e_-)}_1,z_{e_-}]+\ldots +\#[c^{(e_-)}_{m(e_-)},z_{e_-}]\]
of intersection numbers at the other end. The importance of this
definition of Wise is his result in \cite{wstr} stating
that the tubular group defined by $({\cal G},\Gamma)$ acts
freely on a CAT(0) cube complex (which might well be infinite
dimensional and/or not locally finite) if and only if we can find an
equitable set for $({\cal G},\Gamma)$. 

Now we can regard $z_{e_+}$ as a
closed curve lying in the torus at $e_+$ and similarly $z_{e_-}$ at $e_-$,
with these two tori joined by a cylinder corresponding to the edge
$e$ in order to obtain a graph of spaces. Then this condition on
intersection numbers is needed to ensure a bijection between the
intersection points of $z_{e_+}$ with the equitable set at $v_+$ and
those of $z_{e_-}$ with the equitable set at $v_-$. (It is also required
as part of the definition of an equitable set that the elements at each
vertex of this set are not all parallel.)

Here we want to show that there are a range of tubular groups 
which are virtually special, in addition to those which act
properly and cocompactly on a CAT(0) cube complex. Now there
seem to be few criteria in existence which show a group is virtually
special without requiring this geometric action. However we have
a result of Woodhouse in \cite{wdvs} which does exactly this
for tubular groups, which we now review.

If a closed curve $c$ is not primitive, so that it is of the form
$n(a,b)\in\Z^2\setminus\{(0,0)\}$ with $n\geq 2$ and $a,b$ coprime,
then in the Wise result we can either regard $c$ as a primitive curve
traversed $n$ times, or as $n$ parallel disjoint primitive curves. In
the Woodhouse work the second approach is taken and we will do that here
too.

For Woodhouse's virtually special criterion, we first require an
equitable set consisting only of primitive elements (which can be
ensured by the comment above) but which is also fortified, meaning
that every time we inject an edge group $\langle z_e\rangle$ into a
vertex group $G_{e_+}$ (or $G_{e_-}$), the image $z_{e_+}$ of $z_e$ is
parallel to something in the equitable set at $e_+$, or alternatively
there is at least one element $c$ in this equitable set such that the
intersection number $\#[c,z_{e_+}]=0$.

A primitive, fortified equitable set is then used to create (primitive,
fortified immersed) walls. 
This can be thought of as using the equality between
intersection points on either side of an edge to create a graph $\Omega$
where each vertex of $\Omega$ corresponds to an element of the equitable
set (and so can be thought of as ``lying over'' a vertex $v$ of the
graph $\Gamma$). The edges of $\Omega$ are given by taking an edge $e$ of
$\Gamma$ and choosing a bijection $b$ between the intersection points
on either side of $e$, then adding an edge in $\Omega$ between the two
elements of the equitable set in which the pair of intersection points
$p_\pm$ lie, for each pair where $b$ satisfies $b(p_-)=p_+$.    
The resulting graph $\Omega$, which certainly need not be
connected, is the space of immersed walls (actually this space is where
each vertex is not just a point, rather a copy of $S^1$ but for the result 
that follows in \cite{wdvs} each of these copies of $S^1$ can be replaced 
with a single point and we will use this construction). Note that our
graph $\Omega$ can be thought of as having the graph $\Gamma$ as a
``quotient'', in that we have a projection respecting edges which
sends an element of our equitable set to the vertex of $\Gamma$ in
which it lies, and the same with the edges of $\Omega$ and of $\Gamma$.

Let the connected components of $\Omega$ be $\Omega_1,\ldots ,\Omega_k$.
The criterion that we will now use is \cite{wdvs} Proposition 4.8, which
states that if $\Omega_1,\ldots ,\Omega_k$ are primitive, fortified,
undilated immersed walls then the tubular group $G=\pi_1({\cal G})$
is virtually special, with only the condition of being undilated left
to explain. We describe this as follows: suppose we have a directed
edge path $e_1,\ldots ,e_n$ in the graph $\Omega$. The dilation
function from such edge paths to $\Q^*$ is calculated as follows:
we start with value 1 at the vertex $(e_1)_-$, which is an element of
our equitable set, and then as we traverse the edge $e_1$ to arrive at
the vertex $(e_1)_+$, we multiply our value by the ratio of intersection
numbers 
$\#[(e_1)_-,z_{(\overline{e_1})_-}]/\#[(e_1)_+,z_{(\overline{e_1})_+}]$.
Here $\overline{e_1}$ is the edge in $\Gamma$ lying below the edge $e_1\in 
\Omega$ and
$z_{(\overline{e_1})_\pm }$ are the respective inclusions 
at each end of the
generator $z_{(\overline{e_1})}$ of the edge group for
$\overline{e_1}$ in $\Gamma$. We then multiply our value by
$\#[(e_2)_-,z_{(\overline{e_2})_-}]/\#[(e_2)_+,z_{(\overline{e_2})_+}]$  
as we cross the edge $e_2$ in $\Omega$ and so on, thus obtaining the value of
our dilation function for this  edge path when we arrive at the
final vertex $(e_n)_+$. We then say that $\Omega$ has undilated immersed
walls if the dilation function of every closed path in $\Omega$ is 1. 

Now although our original graph $\Gamma$ is a tree, the components
of $\Omega$ need not be. Even if they are, this does not ensure that
$\Omega$ has undilated immersed walls, as demonstrated in
\cite{wdph} Example 4.6.2 where the tubular group
$\langle a,b,c,d|[a,b],[b,c],[c,d]\rangle$ is taken, along with an equitable
set and pairing of intersection points to create a graph $\Omega$ which
is in fact connected but where there exists a dilated immersed wall.
As this group is clearly a RAAG, we see that we have to take care both in
providing an appropriate equitable set and in choosing suitable
bijections between intersection points in order to conclude a group is
virtually special using this criterion.

The following is the crucial point that we will use in order to obtain
undilated immersed walls.
\begin{prop} \label{dil}
Let $G$ be a tubular group obtained from the graph of groups
$({\cal G},\Gamma)$ where $\Gamma$ is a tree. Suppose that we have a
fortified, primitive equitable set for $({\cal G},\Gamma)$ and for
each edge $e$ in $\Gamma$ a choice of bijection between the intersection
points of the inclusion of the edge group generator (written 
$z_{e_\pm}$ at each vertex $e_\pm$) with the equitable set at these vertices
$e_\pm$. This gives rise to our graph $\Omega$.
If $\Omega$ satisfies the following property:\\
\hfill\\
Whenever we traverse two edges $e_1,e_2$ in the same connected component
of $\Omega$ (from $(e_1)_-$ to
$(e_1)_+$ and $(e_2)_-$ to $(e_2)_+$) such that both of their
projections in the graph $\Gamma$ are the same edge $e$ traversed in the
same direction then the intersection number
$\#[(e_1)_-,z_{e_-}]$  is equal to $\#[(e_2)_-,z_{e_-}]$  and 
$\#[(e_1)_+,z_{e_+}]$  is equal to $\#[(e_2)_+,z_{e_+}]$\\
\hfill\\
then $\Omega$ has undilated immersed walls and so $G$ is a virtually
special group by \cite{wdvs} Proposition 4.8.
\end{prop}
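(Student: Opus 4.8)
The plan is to verify Woodhouse's undilatedness condition head-on, by showing the dilation function takes value $1$ on every closed edge path of $\Omega$; the key leverage is that the projection $\Omega\to\Gamma$ carries such a loop to a \emph{closed walk in a tree}.

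First I would record two elementary facts about the one-step dilation
$D(e):=\#[(e)_-,z_{(\overline e)_-}]/\#[(e)_+,z_{(\overline e)_+}]$
attached to a directed edge $e$ of $\Omega$ lying over the directed edge $\overline e$ of $\Gamma$, so that the dilation of an edge path $e_1,\dots,e_n$ is $\prod_{i=1}^n D(e_i)$ (the intersection numbers appearing here are positive, since an edge of $\Omega$ records a matched pair of intersection points, so there is no division by zero). Fact (i): reversing an edge inverts its dilation, $D(\overline e)=D(e)^{-1}$, which is immediate since $(\overline e)_\pm=(e)_\mp$ and $(\overline{\overline e})_\pm=(\overline e)_\mp$. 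Fact (ii): by the hypothesis of the Proposition, if $e_1,e_2$ are directed edges of $\Omega$ in the same connected component whose images $\overline{e_1},\overline{e_2}$ coincide as directed edges of $\Gamma$, then $D(e_1)=D(e_2)$, because the hypothesis forces both the numerators and the denominators to agree. So on each connected component $\Omega_c$ the quantity $D$ depends only on the image directed edge of $\Gamma$; I will write $\rho_c(g)$ for its common value over the directed edge $g$, and note $\rho_c(\overline g)=\rho_c(g)^{-1}$ by (i).

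Next, given a closed edge path $P=(e_1,\dots,e_n)$ in $\Omega$ — necessarily inside a single component $\Omega_c$ — its image $\overline P=(\overline{e_1},\dots,\overline{e_n})$ is a closed walk in the tree $\Gamma$. The combinatorial input is then that a closed walk in a tree crosses each unoriented edge equally often in the two directions: deleting an edge $g$ disconnects $\Gamma$, and a walk returning to its start must traverse the cut edge the same number of times, say $m_g$, each way. I would then group the product $\prod_i D(e_i)$ according to the unoriented edge of $\Gamma$ lying under $e_i$; by (ii) the contribution of a fixed edge $g$ is $\rho_c(g)^{m_g}\rho_c(\overline g)^{m_g}=\bigl(\rho_c(g)\rho_c(\overline g)\bigr)^{m_g}=1$, using (i) (an empty product when $m_g=0$). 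Hence the dilation of $P$ is $1$. Since $P$ was arbitrary, $\Omega$ has undilated immersed walls, and as the given equitable set is primitive and fortified the components $\Omega_1,\dots,\Omega_k$ are primitive, fortified, undilated immersed walls; \cite{wdvs} Proposition 4.8 then yields that $G$ is virtually special.

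The step needing the most care is the orientation bookkeeping in (i)--(ii): one must keep straight that $D$ is a well-defined function of the \emph{directed} image edge on each component, and combine this with the exact cancellation forced by the tree. I would also flag a tempting wrong route — trying to induct by deleting a ``backtrack'' of $\overline P$ directly from $P$. A backtrack of the walk $\overline P$ in $\Gamma$ need not be a genuine backtrack of $P$ in $\Omega$, since the two equitable-set elements of $\Omega$ sitting over the relevant vertex of $\Gamma$ may differ, so such surgery fails to keep $P$ closed; it is precisely the Proposition's hypothesis that lets one sidestep this by working one edge of $\Gamma$ at a time.
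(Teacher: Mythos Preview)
Your proof is correct and rests on the same two ingredients as the paper's: the hypothesis forces the one-step dilation to depend only on the directed image edge in $\Gamma$ (within a component), and closedness of a walk in a tree means zero net traversal of every edge. The packaging differs, however. The paper argues by repeated backtrack removal in the projected walk $\overline P$: since $\Gamma$ is a tree, some two consecutive edges $\overline{e_i},\overline{e_{i+1}}$ are mutually inverse in $\Gamma$, the hypothesis makes the corresponding pair of factors in the dilation product cancel, and one deletes them and iterates. You instead group the product globally by unoriented edge of $\Gamma$ and cancel all at once via the equal-crossings count. Amusingly, the route you flag as a ``tempting wrong route'' is precisely what the paper does --- it meets your objection by observing that although the shortened sequence need not be a genuine edge path in $\Omega$, its projection remains a closed walk in $\Gamma$ and the surviving factors in the product are unchanged, so the induction proceeds purely at the level of the product rather than the path. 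Your global-count version sidesteps that bookkeeping and is arguably cleaner. One small notational point: you overload $\overline{\,\cdot\,}$ for both the projection $\Omega\to\Gamma$ and edge reversal, which makes the statement of Fact~(i) momentarily ambiguous; distinct symbols would help.
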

\begin{proof}
Suppose that $e_1\ldots e_n$ is a closed edge path in the graph $\Omega$,
so that the vertices $(e_1)_-$ and $(e_n)_+$ are the same. This path lies
above an edge path in the original graph $\Gamma$, which we will write
$\overline{e_1}\ldots \overline{e_n}$ and which is also closed. As
$\Gamma$ is a tree, we must have that two successive edges,
$\overline{e_i}$ and $\overline{e_{i+1}}$ say, are the same edge
$\overline{e}$ in $\Gamma$ but traversed in opposite directions.
As for the dilation of $e_1\ldots e_n$, this is equal to 
\begin{equation} \label{dlpr}
\frac
{\#[(e_1)_-,z_{(\overline{e_1})_-}]}
{\#[(e_1)_+,z_{(\overline{e_1})_+}]}\ldots
\frac
{\#[(e_i)_-,z_{(\overline{e_i})_-}]}
{\#[(e_i)_+,z_{(\overline{e_i})_+}]}
\frac
{\#[(e_{i+1})_-,z_{(\overline{e_{i+1}})_-}]}
{\#[(e_{i+1})_+,z_{(\overline{e_{i+1}})_+}]}
\ldots
\frac
{\#[(e_n)_-,z_{(\overline{e_n})_-}]}
{\#[(e_n)_+,z_{(\overline{e_n})_+}]}.
\end{equation}
Now our edge path $e_1\ldots e_n$ is of course always in the same
connected component of $\Omega$. As for our pair of edges $e_i,e_{i+1}$
projecting down to the same edge $\overline{e}\in\Gamma$ but in
opposite directions, our condition in the hypothesis above tells us that
$\#[(e_i)_+,z_{(\overline{e_i})_+}]=
\#[(e_{i+1})_-,z_{(\overline{e_{i+1}})_-}]$ and also
$\#[(e_i)_-,z_{(\overline{e_i})_-}]=
\#[(e_{i+1})_+,z_{(\overline{e_{i+1}})_+}]$.
Thus we can remove these edges from our path $e_1\ldots e_n$ and also
the corresponding intersection numbers from our product in (\ref{dlpr})
without changing the overall dilation of this edge path. Although we may
no longer have an edge path in $\Omega$ on taking away the edges $e_i,
e_{i+1}$, its projection to the tree $\Gamma$ is still a closed
edge path. Consequently we can now find successive edges $e_j,e_{j+1}$
where $\overline{e_j},\overline{e_{j+1}}$ are the same edge in $\Gamma$
with opposite orientations, thus this pair of edges can also be
removed from $\Omega$ and their projections from $\Gamma$. Again our
condition on intersection numbers means that the dilation of our original
path in $\Gamma$ is unchanged on removal of the relevant factors. Now
we can keep removing pairs of edges that ``cancel'' in turn until we
are left with the empty product in (\ref{dlpr}), thus our immersed
walls are undilated.
\end{proof}
\begin{thm} \label{vspe}
Let $G$ be a tubular group obtained from the graph of groups $({\cal G},
\Gamma)$ where $\Gamma$ is a tree. Then there exists a fortified, primitive,
equitable set for $({\cal G},\Gamma)$ such that the graph $\Omega$ obtained
as above satisfies the property in Proposition \ref{dil}. Consequently
$G$ has a finite index subgroup that embeds in a RAAG.
\end{thm}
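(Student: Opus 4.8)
The plan is to construct an explicit fortified, primitive equitable set together with a compatible choice of bijections so that the very strong ``local'' condition in Proposition \ref{dil} holds, after which Proposition \ref{dil} does all the work of producing undilated immersed walls and hence virtual specialness via \cite{wdvs} Proposition 4.8. The key observation is that because $\Gamma$ is a tree we have enormous freedom: we may process the vertices of $\Gamma$ one at a time, say by induction on distance from a fixed root, and at each new vertex $v$ we only ever have to reconcile the equitable-set data along the single edge $e$ joining $v$ to its (already processed) parent. There is no ``loop'' forcing a consistency condition, so the sum-of-intersection-numbers equation defining an equitable set can always be solved.

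The construction I would carry out is as follows. At each vertex $v$ put into the equitable set, for every edge $e$ incident to $v$, a primitive curve $c_e^{(v)}$ having intersection number exactly $1$ with the edge generator $z_{e_v}$ (for instance a basis vector dual to the primitive direction of $z_{e_v}$); if $z_{e_v}$ itself is primitive one may also simply take $c_e^{(v)}=z_{e_v}$, which contributes intersection number $0$ and so simultaneously handles the fortification requirement at that end. To make the set genuinely equitable at an edge $e$ with ends $e_\pm$, arrange that the total intersection number of the equitable set at $e_+$ with $z_{e_+}$ equals the total at $e_-$ with $z_{e_-}$: since we are free to take further parallel copies of the primitive curves we have already chosen (each extra parallel copy adds a fixed positive amount to one side), we can balance the two sides over a tree edge by edge, choosing multiplicities as we descend from the root. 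Finally we must also ensure the non-parallel condition in the definition of an equitable set and the fortification condition at \emph{every} injection; both are easily arranged by throwing in, at each vertex, one extra primitive curve in a generic direction transverse to all of the $z_{e_v}$, or by the $c_e^{(v)}=z_{e_v}$ trick above, whichever is cleaner to state.

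With the equitable set in hand, the bijections are chosen in the only sensible way: at each edge $e$ of $\Gamma$, between the intersection points of $z_{e_+}$ with the equitable set at $e_+$ and those of $z_{e_-}$ with the equitable set at $e_-$, pick any bijection that respects the grouping of intersection points by which parallel family they belong to, and moreover pair a point on the curve $c_e^{(e_+)}$ with a point on the corresponding curve $c_e^{(e_-)}$. One then checks the property in Proposition \ref{dil}: if $e_1,e_2$ are edges of $\Omega$ with the same projection $e$ traversed the same way, then $(e_1)_-$ and $(e_2)_-$ are vertices of $\Omega$ lying over the \emph{same} vertex $e_-$ of $\Gamma$, and by the way the equitable set and the bijection were set up the intersection number $\#[(e_i)_-,z_{e_-}]$ depends only on which parallel family the underlying curve lies in, and similarly at $e_+$ — so the two intersection numbers agree, and likewise on the $+$ side. (If one uses the cleanest version where every curve in the equitable set meets each relevant $z_{e}$ in either $0$ or $1$ point, this is immediate.) Proposition \ref{dil} then yields undilated immersed walls, and \cite{wdvs} Proposition 4.8 gives that $G$ has a finite index subgroup embedding in a RAAG.

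The main obstacle I anticipate is bookkeeping rather than conceptual: one must simultaneously satisfy (i) the equitable-set balancing equations across every edge, (ii) primitivity of every element, (iii) the not-all-parallel condition at each vertex, and (iv) fortification at every injection, and verify that these do not conflict. The tree hypothesis is exactly what decouples (i) into independent one-edge problems, so the real care is in making the \emph{same} set of curves at a vertex $v$ serve all the edges at $v$ at once — in particular a curve introduced to fortify or balance one edge at $v$ must not spoil the property-from-Proposition-\ref{dil} computation along another edge at $v$. I would handle this by insisting that each curve placed at $v$ be associated to a single incident edge and be transverse (intersection number $0$ or $1$) to the edge generators of the \emph{other} incident edges, so that when two $\Omega$-edges project to the same $\Gamma$-edge the relevant intersection numbers are literally determined by the parallel family; this reduces the verification of the Proposition \ref{dil} hypothesis to an inspection of the construction.
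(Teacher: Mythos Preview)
Your overall strategy matches the paper's: process $\Gamma$ from a root, balance the equitable set across each new edge by taking parallel copies, and then invoke Proposition~\ref{dil}. The gap is in the mechanism you propose for guaranteeing the hypothesis of Proposition~\ref{dil}.

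Your concrete device is to arrange that every curve in the equitable set meets every edge generator in $0$ or $1$ points, so that the intersection numbers in the Proposition are all equal to $1$. This fails in general for two reasons. First, if the inclusion $z_{e_v}\in\Z^2$ is not primitive, say $z_{e_v}=g\cdot p$ with $g\geq 2$ and $p$ primitive, then every intersection number $\#[c,z_{e_v}]$ is a multiple of $g$, so $1$ is unattainable. Second, even when all inclusions are primitive, at a vertex $v$ with three or more parallelism classes of edge groups a single curve cannot have intersection $0$ with two non-parallel edge generators simultaneously, so you cannot make each curve ``associated to a single incident edge'' with intersection $0$ on all others. Once the $0$/$1$ trick is gone, your phrase ``bijection respecting the parallel-family grouping'' is not enough: that condition is local to one edge of $\Gamma$ and does not control which curves end up in the same connected component of $\Omega$, which is exactly what the Proposition needs.

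The paper closes this gap with a global ``type'' structure rather than a local intersection bound. One fixes $m$ to be the maximum number of parallelism classes at any vertex, chooses at every vertex $v_i$ a list ${\bf x}_1^{(i)},\dots,{\bf x}_m^{(i)}$ of $m$ distinct primitives (including one per parallelism class, for fortification), and then balances \emph{type by type}: over each edge $e$ the bijection only ever joins copies of ${\bf x}_s^{(e_-)}$ to copies of ${\bf x}_s^{(e_+)}$. This forces $\Omega=\Omega_1\sqcup\dots\sqcup\Omega_m$ with every vertex of $\Omega_s$ over $v_i$ a copy of the single element ${\bf x}_s^{(i)}$, so the Proposition's hypothesis is automatic. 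The price is that balancing type by type at a new edge generally requires multiplying the copies at the already-processed endpoint, and that multiplication must then be propagated back through all previously processed vertices and edges to preserve the earlier balances --- a backtracking step your sketch does not mention but which is essential.
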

\begin{proof}
The construction of our equitable set can be summarised
as follows: at each vertex in turn,
examine the images of all edge groups embedding in this vertex group and
begin by taking the primitive element in each parallelism class of these
images. These must be included if we are to obtain a fortified and
primitive equitable set for $({\cal G},\Gamma)$, though we will also add
some other arbitrary primitive elements to some vertices until these sets
at each vertex are the same size. At this point we have not considered
intersection numbers at all and so have made no attempt to create an
equitable set for $({\cal G},\Gamma)$, let alone one with undilated walls.
However both of these conditions will be achieved by taking many parallel
copies of the elements at each vertex. The number of copies will be chosen
when we add a new vertex of $\Gamma$ at each stage, by using the
intersection number with the new edge group to balance out the copies of
these sets at both ends of our edge so that all intersection numbers are
equal. However we will need to backtrack and take  more copies of our
sets at the vertices which have already been added.

In more detail: let $v_0$ be the vertex of $\Gamma$ with the most parallelism
classes of edge groups (in fact \cite{wstr} Corollaries 5.9 and 5.10
tell us that if this number is at most 2 then $G$ is already virtually
special, so we can assume that we have at least 3 to avoid small cases).
Say there are $m$ ($\geq 3$) classes at $v_0$ and take
${\bf x}_1^{(0)},\ldots ,{\bf x}_m^{(0)}$ to be the primitive elements
in each parallelism class. Now label the other vertices consecutively
using a breadth based order with $v_0$ at the root. Then at each other 
vertex $v_i$ take
a primitive element for each parallelism class of edge groups in the vertex
group $G_{v_i}$, which we will label as
${\bf x}_1^{(i)}, \ldots ,{\bf x}_{m_i}^{(i)}$ and where we have
$m_i\leq m$. Next we add arbitrary new primitive elements at each vertex
until we have $m$ of them, thus obtaining
${\bf x}_1^{(i)},\ldots ,{\bf x}_m^{(i)}$ at each vertex $v_i$ where now
$m$ is independent of $i$ and these $m$ primitive elements are all
distinct.

We now proceed through the tree, vertex by vertex, to build our equitable
set out of repeated copies of these elements. We will end up by building
$m$ subgraphs $\Omega_i$, which need not be connected, but such that
$\Omega$ is the disjoint union of $\Omega_1,\ldots ,\Omega_m$. First take
the base $v_0$ and an adjacent vertex $v_1$, connected by the edge $e$.
On taking as usual $z_{e}$ to be a generator of the edge group, with
inclusions $z_{e_-}$ in the vertex $v_0$ and $z_{e_+}$ at $v_1$, our
first priority is to examine the intersection numbers
\begin{eqnarray*}
i_1=\#[{\bf x}_1^{(0)},z_{e_-}],&\ldots &,i_m=
\#[{\bf x}_m^{(0)},z_{e_-}]\qquad\mbox{ and}\\
j_1=\#[{\bf x}_1^{(1)},z_{e_+}],&\ldots &,j_m=
\#[{\bf x}_m^{(1)},z_{e_+}].
\end{eqnarray*}
In each list of numbers, exactly one is zero and the rest are positive
because at each vertex the primitive element parallel to this edge group
was included, but no other element parallel to it was included. Thus all
other numbers in each list are strictly positive, so here we renumber to
make $i_1=j_1=0$. We now start to create the graph $\Omega_1$ by placing
a vertex $\omega_1^{(0)}$ above $v_0$, a vertex $\omega_1^{(1)}$ above
$v_1$ and no edge between them. Moving now to $i_2$ and $j_2$, there is
no a priori relationship between them, so no reason to assume $i_2=j_2$
which would enable the construction of an equitable set. But we are free
to take repeated copies of these elements on both sides, so we begin the
construction of the subgraph $\Omega_2$ by placing $j_2$ separate vertices
above $v_0\in\Gamma$ and $i_2$ vertices above $v_1\in\Gamma$.
The intersection numbers tell us that each vertex in $\Omega_2$ above
$v_0$ has $i_2$ edges leaving it and all of these edges need to end up
at a vertex above $v_1$, But similarly we have $j_2$ edges each arriving
at the $i_2$ vertices above $v_1$, so we can pair up all of these $i_2j_2$
edges bijectively so that they run between our vertices above $v_0$ and
$v_1$ (for instance in a $j_2$ by $i_2$ ``rectangular grid''). We then
proceed in this way through the 3rd,$\ldots$, $m$th pair of intersection
numbers until we have (partial) subgraphs $\Omega_1,\ldots , \Omega_m$
all lying above $v_0,v_1$ and $e$ in $\Gamma$.

We now take our next vertex $v_2$, which will also joined to the base vertex 
$v_0$ by the edge $f$ say. At $v_0$ our set is currently a single copy of
${\bf x}_1^{(0)}$, then $j_2$ copies of ${\bf x}_2^{(0)}$,$\ldots$,
$j_m$ copies of ${\bf x}_m^{(0)}$ whereas for our new vertex $v_2$ it is
just single copies each of ${\bf x}_1^{(2)},\ldots ,{\bf x}_m^{(2)}$.
Let us now examine the corresponding intersection numbers; these
give us new integers
\begin{eqnarray*}
k_1=\#[{\bf x}_1^{(0)},z_{f_-}],&\ldots &,k_m=\#[{\bf x}_m^{(0)},z_{f_-}]
\mbox{ and}\\
l_1=\#[{\bf x}_1^{(2)},z_{f_+}],&\ldots &,l_m=\#[{\bf x}_m^{(2)},z_{f_+}]
\end{eqnarray*}
respectively. Once again, there is a single zero in each list and we permute
the $l$s so that $k_r=l_r=0$. Now we match up the sets across the edge
$f$: assuming that $r>1$ (as if $r=1$ then we can proceed exactly as
before), we change our single copy of 
${\bf x}_1^{(0)}$ at $v_0$ to $l_1$ copies of ${\bf x}_1^{(0)}$, then put
$k_1$ copies of ${\bf x}_1^{(2)}$ at $v_2$. In terms of the subgraph
$\Omega_1$, what was previously just a single vertex above $v_1$ 
remains as only one above $v_1$, but now we have $l_1$  vertices
above $v_0$, each with $k_1$ edges leaving in the direction of $f$, and we
also have $k_1$ vertices above $v_2$, each with $l_1$ edges arriving from
$f$. Once again we pair off these $k_1l_1$ edges on either side with each
other in some bijective correspondence.

We now update the subgraph $\Omega_2$, which currently has $j_2$ vertices
above $v_0$ corresponding to copies of the element ${\bf x}_2^{(0)}$ and
$i_2$ vertices above $v_1$ corresponding to elements ${\bf x}_2^{(1)}$.
In order to match the intersection numbers $k_2$ at $v_0$ and $l_2$ at
$v_2$, we increase the $j_2$ copies of ${\bf x}_2^{(0)}$ at $v_0$ to
$j_2l_2$ copies and we now put $j_2k_2$ copies of ${\bf x}_2^{(2)}$ at
$v_2$. This means that in $\Omega_2$ we have two lots of $j_2k_2l_2$ half
edges lying above the edge $f$ which  can be paired off and joined up.

This is fine for the new vertex $v_2$ but we have changed the number of
copies of ${\bf x}_2^{(0)}$ at $v_0$ from $j_2$ to $j_2l_2$. However these
$j_2$ vertices had a total of $i_2j_2$ edges joined to the $i_2$ vertices
at $v_1$. Consequently we now need to change the equitable set at $v_1$
from $i_2$ copies of this element ${\bf x}_2^{(1)}$
to $i_2l_2$ copies, so that the extra
edges now above $v_0$ in the direction of $e$ can all be joined up to these
vertices above $v_1$.

We continue in this way where we add to $\Omega_s$ (for
$s=3,\ldots ,m$ in turn)
the appropriate number of vertices above $v_2$ and the correct
number of multiples of our vertices in $\Omega_s$ that already lie above 
$v_0$, so that we can create a bijection between these intersection points,
and then replicate the same number of multiples of the vertices in $\Omega_s$
above $v_1$. (For $s=r$ though, we have intersection number zero
at $v_2$ which means there are no intersection points that need to
be matched up. Here we merely place a single copy of ${\bf x}_r^{(2)}$
in $\Omega_r$ above $v_2$ and add no edges.) 

Now we proceed to the other vertices in order of our labelling, thus
first working through the edges attached to $v_0$ and then to its
descendents and so on. On adding a new edge $e$ and new vertex $v_n$
joined by $e$ to a previous vertex $v_p$,
at the $s$th stage we take our current copy of $\Omega_s$ and place
the correct number of copies of ${\bf x}_s^{(n)}$ above $v_n$,
as determined by the intersection numbers of ${\bf x}_s^{(p)}$ and
${\bf x}_s^{(n)}$ with $e$ (or one copy if that latter number is zero,
then skipping the next step),
along the appropriate number of multiples of the copies of ${\bf x}_s^{(p)}$
already in place above $v_p$. We then replicate the same number
of multiples of every other
vertex and edge in $\Omega_s$ that has been placed so far
(in fact, it is enough
just to do this within the current connected component of $\Omega_s$).
We finish with the leaf vertices, thus obtaining the final graphs
$\Omega_1,\ldots ,\Omega_m$.

Thus we finish with an equitable set for $({\cal G},\Gamma)$ because
at each stage we made sure to create the necessary bijections
between individual intersection points, thus also between the unions
of these intersection points on either side of any edge. The set is
also clearly fortified and primitive. As for the condition in Proposition
\ref{dil}, our method of construction for $\Omega$ was that whenever
we were in a single subgraph $\Omega_s$, thus certainly within
a connected component of $\Omega$, all edges in $\Omega_s$ that were
placed over the same edge in $\Gamma$ were joined at each end to
possibly different copies of the same element ${\bf x}_s^{(?)}$ say
at one end and ${\bf x}_s^{(??)}$ at the other, thus
at each end 
the intersection numbers with the edge subgroup generator
are all equal. Thus Proposition \ref{dil} applies and we conclude
that $\pi_1({\cal G})$ is a virtually special group. 
\end{proof}

As an immediate Corollary we obtain linearity for all of these
groups, because RAAGs are linear (even over $\Z$).
\begin{co} \label{cof}
If the tubular group $G$ is defined by the graph of groups
$({\cal G},\Gamma)$ where $\Gamma$ is a tree then $G$ is linear
over $\Z$.
\end{co}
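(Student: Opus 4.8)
The plan is to read this off from Theorem \ref{vspe} together with two standard permanence facts: linearity over $\Z$ is inherited by subgroups, and it is also inherited by finite index overgroups. By Theorem \ref{vspe} there is a subgroup $H\leq_f G$ and an embedding $H\injects A$ for some finitely generated RAAG $A$. Since every finitely generated RAAG is linear over $\Z$ (it embeds in $\mathrm{SL}_N(\Z)$ for a suitable $N$, e.g.\ via the associated right angled Coxeter group, or by the explicit constructions in the literature), and a subgroup of a group linear over $\Z$ is again linear over $\Z$, composing gives a faithful representation $\rho\colon H\to\mathrm{GL}_n(\Z)$ for some $n$.

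It then remains to pass from $H$ to its finite index overgroup $G$, and here I would use the induced representation. Put $d=[G:H]$ and choose left coset representatives $g_1=1,g_2,\ldots,g_d$, so $G=\bigsqcup_i g_iH$. The induced $\Z[G]$-module $V=\Z[G]\otimes_{\Z[H]}\Z^n$ is, as an abelian group, free of rank $nd$, with the decomposition $V=\bigoplus_{i=1}^d g_i\otimes\Z^n$. For $g\in G$ one has $gg_i=g_j h$ for a unique $j$ and $h=h(g,i)\in H$, whence $g\cdot(g_i\otimes v)=g_j\otimes\rho(h)v$; thus $G$ acts on $V$ by monomial integer matrices and we obtain a homomorphism $G\to\mathrm{GL}_{nd}(\Z)$.

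The only thing requiring a genuine (if short) argument is that this induced representation is faithful given that $\rho$ is. Suppose $g\in G$ acts trivially on $V$. Looking at the block $g_1\otimes\Z^n$ with $g_1=1$: we have $g\cdot(1\otimes v)=g\otimes v$, and triviality forces this to equal $1\otimes v$ for every $v$, which by the direct sum decomposition means $g\in H$ and $\rho(g)v=v$ for all $v$, i.e.\ $g\in\ker\rho=\{1\}$. Hence $G\injects\mathrm{GL}_{nd}(\Z)$ and $G$ is linear over $\Z$. I do not expect any real obstacle: all the substance is in Theorem \ref{vspe}, and the linearity bookkeeping above — the $\Z$-freeness of the induced module and the faithfulness check — is entirely routine.
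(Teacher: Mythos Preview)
Your proposal is correct and follows essentially the same approach as the paper: invoke Theorem \ref{vspe} together with the linearity of RAAGs over $\Z$, then use the standard permanence of $\Z$-linearity under subgroups and finite-index overgroups. Your explicit induced-representation argument merely spells out what the paper leaves as a one-line remark.
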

In this regard, we note the conjecture of Metaftsis, Raptis and
Varsos in \cite{mrv} that the fundamental group of a graph of groups,
where the graph is a finite tree and the
vertex groups are all finitely generated abelian, is linear. Thus this result
establishes their conjecture in the case of $\Z^2$ vertex groups and
$\Z$ edge groups. But clearly not all tubular groups are linear.

We also note here that these groups $G$ in Corollary \ref{cof} are all
free by cyclic, by \cite{mecan} Corollary 2.2, thus obtaining more
free by cyclic linear groups. It is unknown whether all free by
cyclic groups are linear (this is open for Example 2 below).
Moreover these groups $G$ all have linearly growing monodromy,
whereas in the recent paper \cite{briv} it was shown by direct
construction of special cube complexes that for each $k\in \N$
there is a virtually special free by cyclic group with monodromy
growth polynomial of degree $k$.

We finish this section with a few basic examples to illustrate the
different types of geometric and group theoretic behaviour that
we see in tubular groups. We first note that \cite{wstr} Corollary 5.10
gives a complete classification of the tubular groups acting
geometrically on a CAT(0) cube complex: they have exactly one or two
parallelism classes of edge groups in each vertex group and do not contain
any unbalanced Baumslag - Solitar subgroup. In particular we see from Theorem
\ref{vspe} that there are many tubular groups which are
virtually special but which do not act geometrically on any 
CAT(0) cube complex: namely taking $({\cal G},\Gamma)$ with $\Gamma$ a tree
that contains a star with three edges and such that each inclusion into the
central vertex group of this star is in a different
parallelism class. (As $\Gamma$ is a tree, we already know that the
tubular group is CAT(0) and so contains no unbalanced Baumslag - Solitar
subgroups.) As for other types of behaviour, we now present two famous
examples. In both cases $\Gamma$ is a single point with vertex group 
$\langle a,b\rangle\cong\Z^2$ joined by two self loops.\\
\hfill\\
{\bf Example 1}: Wise's tubular group $G$, which was shown to be
both non Hopfian and CAT(0) in \cite{wsnh}, is given by the
two pairs of inclusions $a$ and $a^2b^2$, $b$ and $a^2b^2$.
Thus it is a tubular group which is CAT(0) but not virtually
special. (If it were then the
resulting finite index special group would be residually finite as a 
subgroup of a RAAG, hence so would $G$. However it is well known that
for finitely generated groups, being residually finite implies being
Hopfian.) We will see some very similar groups in the next section.\\
\hfill\\
{\bf Example 2}: Gersten's free by cyclic group, which was
shown not to be a CAT(0) group in \cite{ger},
can be expressed as a tubular
group where one edge provides inclusions $ab$ and $b$, and the other
$a^{-1}b$ and $b$. Thus it is an example of a tubular group which is
not CAT(0) but which is residually finite (as a free by cyclic group)
and hence Hopfian, and which does not contain an unbalanced
Baumslag - Solitar subgroup (for instance by \cite{mel} Proposition 2.5).

\section[A 1-relator group from a tubular group]{A 1-relator group 
from a tubular group, following Gardam
and Woodhouse}

In \cite{gw} the following family of presentations is
considered:
\[\langle x,y,t|x^2=y^2, t^{-1}x^{2q}t=x^{2p-1}y\rangle  \]
where $p,q$ are positive integers. The group $R_{p,q}$ so defined
has of course a 2-generator 1-relator presentation (just eliminate $y$).
Interestingly it is shown that $R_{p,q}$ has an index 2 subgroup
$G_{p,q}$ with
the following presentation:
\[G_{p,q}=\langle a,b,s,t|[a,b],s^{-1}a^qs=a^pb,t^{-1}a^qt=a^pb^{-1}\rangle
\]
which we immediately recognise as a tubular group in line with the
examples in Section 4. Moreover on taking $p>q\geq 1$, these are 
exactly the snowflake groups in \cite{brbr} which have unusual Dehn functions
that are greater than quadratic and so they cannot be CAT(0). Thus
\cite{gw} provides the first examples of 1-relator groups which are
not CAT(0) 
but which do not contain unbalanced Baumslag - Solitar subgroups.
Also \cite{gw} Proposition 4 shows that $R_{p,q}$ does not act freely
on a CAT(0) complex for $p>q$, because $G_{p,q}$ does not (by applying
Wise's equitable sets condition). 

But what about the 1-relator groups $R_{p,q}$ where $q\geq p\geq 1$? 
In this case
we still have the index 2 tubular subgroup $G_{p,q}$ with presentation
as above. For $p=q=1$ \cite{gw} points out that $G_{1,1}$ is none
other than Gersten's group. Thus $R_{1,1}$ is a 1-relator group that
is virtually free by cyclic (indeed it is free by cyclic by K.\,S.\,Brown's
criterion) with quadratic Dehn function and which acts freely on a CAT(0)
complex but which is not CAT(0).

Gardam showed in his thesis \cite{gg}
that the group $R_{p,q}$ for $p,q\geq 1$ is CAT(0) exactly when $q>p$.
This suggests trying to mimic Wise's non Hopfian CAT(0) group
construction from \cite{wsnh} which was Example 1 in the previous section.
\begin{thm} \label{hop}
For odd $q>p=1$, the tubular group $G_{1,q}$ above
is not Hopfian.
\end{thm}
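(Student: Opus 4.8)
The plan is to mimic the classical Baumslag--Solitar non-Hopfian argument, as adapted by Wise in \cite{wsnh} for his tubular group, since the group $G_{1,q}=\langle a,b,s,t\mid [a,b],\,s^{-1}a^qs=a^pb,\,t^{-1}a^qt=a^pb^{-1}\rangle$ with $p=1$ has the same flavour as Wise's example (two HNN-type stable letters $s,t$ each conjugating a power of $a$ into the $\Z^2$ vertex group). First I would write down an explicit endomorphism $\phi\colon G_{1,q}\to G_{1,q}$; the natural guess, following Wise and Baumslag--Solitar, is to send $a\mapsto a^q$ (or some suitable power), fix $s$ and $t$, and send $b$ to whatever is forced by the relations. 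The two defining relations become $s^{-1}(a^q)^q s = a^q\,\phi(b)$ and $t^{-1}(a^q)^q t = a^q\,\phi(b)^{-1}$; using the relations in $G_{1,q}$ one computes $s^{-1}a^{q^2}s = (s^{-1}a^q s)^q = (ab)^q = a^q b^q$ (here commutativity of $a,b$ is used), so the first relation forces $\phi(b)=b^q$, and symmetrically the second is then automatically satisfied. Hence $\phi(a)=a^q$, $\phi(b)=b^q$, $\phi(s)=s$, $\phi(t)=t$ defines a genuine endomorphism.

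Next I would show $\phi$ is surjective. It clearly has $a^q, b^q, s, t$ in its image; it suffices to recover $a$ (then $b$ comes for free, or directly). From the relation $s^{-1}a^q s = ab$ we get $ab \in \mathrm{im}(\phi)$ since $s^{-1}(a^q)s$ lies in the image; combined with $a^q, b^q \in \mathrm{im}(\phi)$ and $\gcd$ considerations (using that $q$ is odd, so $\gcd$ of the relevant exponents works out), one extracts $a$ and $b$ individually. This is the step where the precise numerics — and the hypothesis that $q$ is odd — enter, so I would be careful here; the oddness of $q$ should be exactly what makes the relevant exponent arithmetic close up, paralleling the role of coprimality conditions in $BS(m,n)$.

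Finally, and this is the main obstacle, I must exhibit a nontrivial element of $\ker\phi$. The standard Baumslag--Solitar witness is a commutator of the form $w = [\,t s^{-1},\, a\,]$ or similar — more precisely one takes an element $g$ conjugate to a power of $a$ via $s$ in one way and via $t$ in another, so that $g$ and a conjugate of $g$ coincide after applying $\phi$ but not before. Concretely I expect the element to be something like $[s^{-1}ts,\ a]$ or $[t^{-1}s,\ a^{?}]$: under $\phi$ it maps into a subgroup where $a^q$ already commutes with the relevant conjugate (because the image lands in a place governed by the $q$-th power relations), so $\phi(w)=1$; but in $G_{1,q}$ itself one shows $w\ne 1$ by projecting to a quotient or by a normal-form / Bass--Serre tree argument for the underlying graph of groups — the edge and vertex structure of the tubular group gives a faithful action on a tree, and one checks $w$ acts nontrivially (e.g.\ it is hyperbolic or it moves a vertex). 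Establishing $w \ne 1$ rigorously via the graph-of-groups normal form, while keeping track of which powers of $a$ are identified under conjugation by $s$ versus $t$, is the delicate part; once $\phi$ is shown to be a surjective endomorphism with nontrivial kernel, $G_{1,q}$ is non-Hopfian by definition, completing the proof. As noted in the discussion of Example~1, non-Hopficity then also yields non-residual-finiteness of $G_{1,q}$ and hence of $R_{1,q}$.
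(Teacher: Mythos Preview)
Your overall strategy matches the paper's exactly: the same endomorphism $\phi(a)=a^q$, $\phi(b)=b^q$, $\phi(s)=s$, $\phi(t)=t$, then surjectivity from oddness of $q$, then a commutator witness killed by $\phi$ and shown nontrivial by a normal-form argument. Two concrete details need fixing, however.

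For surjectivity, using only $ab$, $a^q$, $b^q$ is not enough: in $\langle a,b\rangle\cong\Z^2$ these three elements generate only the index-$q$ sublattice $\langle ab,\,a^q\rangle$, so you cannot extract $a$ from them regardless of parity. You must also invoke the $t$-relation to get $ab^{-1}\in\mathrm{im}(\phi)$; then $(ab)(ab^{-1})=a^2$ together with $a^q$ and $\gcd(2,q)=1$ yields $a$, and similarly $b$. This is exactly where oddness of $q$ enters, and it is how the paper argues.

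For the kernel element, your candidate commutators such as $[s^{-1}ts,\,a]$ or $[t^{-1}s,\,a^{?}]$ do not visibly die under $\phi$: since $\phi$ fixes $s,t$ and sends $a\mapsto a^q$, you would need $a^q$ to commute with $s^{-1}ts$, and no relation gives that. The paper's witness is simpler and does not mix the two stable letters: take $w=[s^{-1}as,\,ab^{-1}]$. Then $\phi(s^{-1}as)=s^{-1}a^qs=ab$, which lies in the abelian vertex group $\langle a,b\rangle$ and hence commutes with $ab^{-1}$, so $\phi(w)=1$. On the other hand $a\notin\langle a^q\rangle$ for $q>1$, so $s^{-1}as$ admits no pinch and $w$ is already in reduced form for the double HNN extension; Britton's Lemma gives $w\neq 1$ directly, with no need for a Bass--Serre tree computation.
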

\begin{proof}
We have
\[G_{1,q}:=\langle a,b,s,t|[a,b],s^{-1}a^qs=ab,t^{-1}a^qt=ab^{-1}\rangle,\]
so can take $\theta:G_{1,q}\rightarrow G_{1,q}$ which sends $a$ to $a^q$
and $b$ to $b^q$ but fixes $s$ and $t$. This preserves all three relations
of $G_{1,q}$ so is a well defined homomorphism. Now $s,t,a^q,b^q$ are
all in the image of $\theta$, thus so too is $ab$ and $ab^{-1}$. As $q$
is odd, we also get $a$ and $b$ so $\theta$ is onto.

To show $\theta$ is not injective, we apply the usual properties of HNN 
extensions as in \cite{wsnh} and even in the original construction of
Baumslag - Solitar groups. The commutator
\[ [s^{-1}as,ab^{-1}]=s^{-1}asab^{-1}s^{-1}a^{-1}sa^{-1}b\]
is a non trivial element of $G_{1,q}$ by Britton's Lemma for multiple
HNN extensions as there are no pinches, but it maps to the
identity as $\theta(s^{-1}as)=ab$.
\end{proof}
\begin{co}
The 1-relator group $R_{1,q}$ for odd $q\geq 3$ is CAT(0)   but
contains the index 2 subgroup $G_{1,q}$ which is not residually
finite, thus nor is $R_{1,q}$.
\end{co}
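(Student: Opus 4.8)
The plan is to deduce the corollary essentially for free from Theorem~\ref{hop} together with the results already quoted in the excerpt. First I would recall that Gardam's thesis \cite{gg} established that $R_{p,q}$ is CAT(0) precisely when $q>p$, so in particular $R_{1,q}$ is CAT(0) for every $q\ge 2$, hence certainly for odd $q\ge 3$. Next I would invoke the Gardam--Woodhouse construction from \cite{gw}: the group $G_{1,q}$ sits inside $R_{1,q}$ as a subgroup of index $2$. So it remains only to transfer non-residual-finiteness from $G_{1,q}$ up to $R_{1,q}$, and to note that $G_{1,q}$ itself is not residually finite.

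For the second point, I would argue as in Example~1 of Section~2: Theorem~\ref{hop} tells us that for odd $q>p=1$ the tubular group $G_{1,q}$ is not Hopfian, and a finitely generated residually finite group is always Hopfian (Malcev). Hence $G_{1,q}$ is not residually finite. For the transfer step, the key observation is that residual finiteness passes to \emph{and} from finite-index subgroups: if $R_{1,q}$ were residually finite, then its finite-index subgroup $G_{1,q}$ would be residually finite as well (residual finiteness is inherited by all subgroups, a fortiori by finite-index ones). Since $G_{1,q}$ is not residually finite, neither is $R_{1,q}$. I should make sure the hypothesis ``odd $q\ge 3$'' in the corollary matches the hypothesis ``odd $q>p=1$'' of Theorem~\ref{hop}, which it does since $q>1$ and $q$ odd forces $q\ge 3$.

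The main (indeed only) obstacle is bookkeeping: one must be careful that the CAT(0) range ($q>p$, here $q\ge 2$) and the non-Hopfian range ($q$ odd, $q\ge 3$) actually overlap, which they do on odd $q\ge 3$, so the corollary is not vacuous. No genuinely hard step is required; the proof is a two-line combination:
\begin{proof}
By \cite{gg} the group $R_{1,q}$ is CAT(0) since $q>1$. By \cite{gw} it contains $G_{1,q}$ as a subgroup of index $2$. As $q\ge 3$ is odd we have $q>p=1$ with $q$ odd, so Theorem \ref{hop} gives that $G_{1,q}$ is not Hopfian; being finitely generated, it is therefore not residually finite (a finitely generated residually finite group is Hopfian). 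Since residual finiteness is inherited by subgroups, $R_{1,q}$ cannot be residually finite either, for otherwise its index $2$ subgroup $G_{1,q}$ would be.
\end{proof}
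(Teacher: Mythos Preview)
Your proposal is correct and matches the paper's approach exactly: the corollary is stated without a separate proof in the paper, being an immediate consequence of Theorem~\ref{hop}, Malcev's theorem that finitely generated residually finite groups are Hopfian, the index~2 inclusion $G_{1,q}\leq R_{1,q}$ from \cite{gw}, and Gardam's CAT(0) result from \cite{gg}. Your write-up simply makes these implicit steps explicit.
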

In particular $R_{1,q}$ is not virtually special. We believe this is the
first known 1-relator group which is CAT(0) but not residually
finite.

We now consider the question of when the group $G_{p,q}$ (equivalently
$R_{p,q}$) is residually finite. Here we will not insist that $p,q$
are positive, rather we will allow them to be any non zero integers.
By Malce'ev's result that a finitely
generated residually finite group is Hopfian, we know that any element
in the kernel of a surjective endomorphism must lie in the intersection
of all finite index subgroups, motivating the next proof.
\begin{thm} \label{resf}
The group $G_{p,q}$ is not residually finite if $q$ does not divide
$2p$. 
\end{thm}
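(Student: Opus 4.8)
The plan is to mimic the non-Hopfian argument of Theorem \ref{hop} but now in the generality where $q \nmid 2p$, producing a surjective endomorphism of $G_{p,q}$ with a specified nontrivial element in its kernel, and then to invoke Malcev's theorem: a finitely generated residually finite group is Hopfian, so a non-Hopfian group cannot be residually finite. First I would look for an endomorphism $\theta\colon G_{p,q}\to G_{p,q}$ of the same shape as before, fixing $s$ and $t$ and sending $a\mapsto a^n$, $b\mapsto b^n$ for a suitable integer $n$; the relation $s^{-1}a^qs=a^pb$ forces $\theta$ to respect $s^{-1}a^{nq}s = a^{np}b^n$, which is automatic since $(s^{-1}a^qs)^n = (a^pb)^n = a^{np}b^n$ in the $\Z^2$ vertex group, and likewise for the $t$-relation. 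So any such $\theta$ is a well-defined homomorphism. The question of surjectivity is where the divisibility condition will really enter: the image contains $s,t,a^{nq},b^{nq}$, hence $(a^pb)^{n}$ and $(a^pb^{-1})^{n}$, hence $a^{2pn}$ and $b^{2n}$ (multiplying the two, and dividing), so the image contains the subgroup of the vertex group generated by $a^{\gcd(nq,2pn)}$ and $b^{\gcd(nq,2n)}$ together with $s,t$. For $\theta$ to be onto I would want $a,b$ themselves in the image; a clean way is to choose $n$ coprime to $2p$ (for instance, if $q \nmid 2p$ one can often take $n=q$, but in general one picks $n$ appropriately — say $n$ equal to $q$ divided by $\gcd(q,2p)$, or more simply just take $n=q$ and argue directly), and then $\gcd(nq,2pn) = n\gcd(q,2p)$, so one recovers $a$ and $b$ precisely when the relevant gcd's collapse. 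The exact choice of $n$ and the bookkeeping to force surjectivity is the part I would need to get right.

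Next, to show $\theta$ is not injective I would exhibit an explicit nontrivial element of the kernel, exactly as in Theorem \ref{hop}: the element $s^{-1}as$ and the element $t^{-1}at$ (or $a$) generate the same image under $\theta$ as some honest relator pushes through, so a commutator such as $[\,s^{-1}as,\ t^{-1}at\,]$ or $[\,s^{-1}as,\ ab^{-1}\,]$ maps to the identity because $\theta(s^{-1}as)=a^pb$ (say) commutes with $\theta(ab^{-1})=a^n b^{-n}$ inside the abelian vertex group. That this element is itself nontrivial in $G_{p,q}$ is the HNN-extension/Britton's Lemma point: $G_{p,q}$ is a multiple HNN extension of $\Z^2$, and the word in question, when written in normal form, has no pinches (no subword of the form $s^{-1}w s$ or $sws^{-1}$ with $w$ in the appropriate edge subgroup), hence is genuinely nontrivial. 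So $\theta$ is a surjective non-injective endomorphism, $G_{p,q}$ is non-Hopfian, hence not residually finite, and since $G_{p,q}$ has finite index in $R_{p,q}$ the latter is not residually finite either.

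The main obstacle I anticipate is the surjectivity step: one has to choose the exponent $n$ (as a function of $p$ and $q$) so that (i) the map is well-defined — which is essentially free given the ``$n$th power'' trick — and (ii) $a$ and $b$ lie in the image, which is where the hypothesis $q \nmid 2p$ is genuinely used, since otherwise (when $q\mid 2p$) the construction should fail to be surjective (and indeed Theorem \ref{resf}'s converse, stated earlier, says residual finiteness holds in that case). Concretely, the delicate point is computing the subgroup of $\Z^2 = \langle a,b\rangle$ generated by the images $a^{nq}, b^{nq}, (a^pb)^n, (a^pb^{-1})^n$ and checking it is all of $\Z^2$ precisely under the divisibility failure; I would pick $n$ so that $\gcd(n q, 2pn)$ and the analogous quantity for $b$ both equal $n$, forcing $a^n\cdot(\text{something in image})= a$-type conclusions. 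The remaining steps — verifying the relations are preserved, and the Britton's Lemma check that the kernel element is nontrivial — are routine and parallel to Theorem \ref{hop}.
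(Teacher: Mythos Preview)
Your strategy differs from the paper's and, as written, has a real gap. You aim to show $G_{p,q}$ is non-Hopfian via an endomorphism $\theta$ of the form $a\mapsto a^n$, $b\mapsto b^n$, $s\mapsto s$, $t\mapsto t$, and then invoke Malcev. But the two constraints on $n$ clash. For your proposed kernel element $[s^{-1}a^ks,\,a^pb^{-1}]$ to die under $\theta$ you need $s^{-1}a^{nk}s\in\langle a,b\rangle$, i.e.\ $q\mid nk$, while Britton's Lemma forces $q\nmid k$; together these require $\gcd(n,q)>1$. On the other hand the abelianisation of $G_{p,q}$ is $\Z^2\times\Z/2|q-p|$ with the torsion generated by the image of $a$, so surjectivity of $\theta$ even on the abelianisation requires $\gcd(n,2|q-p|)=1$. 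For $(p,q)=(1,4)$ (which does satisfy $q\nmid 2p$) these conditions demand simultaneously that $n$ be even and that $\gcd(n,6)=1$: impossible. The same obstruction arises whenever every prime factor of $q$ already divides $2p$, for instance $(p,q)=(3,4)$ or $(2,8)$. So the power-map endomorphism cannot be made simultaneously surjective and non-injective in these cases, and there is no evident reason $G_{p,q}$ should be non-Hopfian there at all. You correctly flagged surjectivity as the delicate step, but it is not merely bookkeeping: it genuinely fails.

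The paper avoids Hopfianity entirely and argues directly. With $h=\gcd(2p,q)$, so that $0<h<q$ by hypothesis, the element $x=[s^{-1}a^hs,\,a^pb^{-1}]$ is nontrivial by Britton's Lemma since $a^h\notin\langle a^q\rangle$. In any finite quotient, comparing the order of $a^q$ with that of $a^pb$ and $a^pb^{-1}$ shows that $\gcd(r,q)$ divides $2p$ (where $r$ is the order of the image of $a$), hence divides $h$; thus $a^h$ becomes a power of $a^q$ there, $s^{-1}a^hs$ lands in $\langle a^pb\rangle$ and commutes with $a^pb^{-1}$, so $x$ vanishes in every finite quotient. This yields non--residual finiteness without ever constructing an endomorphism.
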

\begin{proof}
Let $h=(2p,q)$ be the highest common factor of $2p$ and $q$, so that $0<h<q$.
Consider the element $x=[s^{-1}a^hs,a^pb^{-1}]$ which, as in Theorem \ref{hop}
above, has no pinches and so is a non trivial element of $G_{p,q}$.

Now take any homomorphism from $G_{p,q}$ to a finite group $F$
and let $r$ be the order of (the image of)
$a$ in $F$, so that $a^q$ has
order $d$ where $d=r/(r,q)$. This means that both $a^pb$ and $a^pb^{-1}$
have order $d$ too, but $a$ and $b$ still commute in $F$ so that
$e=a^{2pd}=b^{2d}$. Hence $r$ divides $2pd$ and so $(r,q)$ divides $2p$.

Consequently $(r,q)$ divides both $2p$ and $q$, hence $h$ also.
Thus there will exist integers $u,v$ with $ru+vq=h$, so that in $F$
we have $s^{-1}a^hs=s^{-1}a ^{ru+vq}s=(s^{-1}a^qs)^v$. Hence $s^{-1}a^hs$
is a power of $s^{-1}a^qs=a^pb$ in $F$ and will therefore
commute with any elements
that commute with $a^pb$, in particular $a,b$ and $a^pb^{-1}$.
Thus the element $x\in G_{p,q}$ above is non trivial, but trivial in any
finite quotient.
\end{proof}

Theorem 3.7 of \cite{kim} shows that a tubular group (or indeed the
fundamental group of any graph of finitely generated free abelian groups
with infinite cyclic edge groups) is residually finite
if each edge inclusion sends the generator of the edge group $\Z$ to
 a primitive element of the corresponding vertex group. This allows
 us to obtain a complete characterisation of the residually
finite groups in the $G_{p,q}$ family, and hence in the $R_{p,q}$
family too.

\begin{co} The group $G_{p,q}$ is residually finite if and only if
$q$ divides $2p$.
\end{co}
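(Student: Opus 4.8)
The plan is to combine the two results that have just been proved or cited. Theorem \ref{resf} already gives one direction: if $q$ does not divide $2p$ then $G_{p,q}$ is not residually finite, so residual finiteness forces $q\mid 2p$. It therefore remains to establish the converse, namely that $q\mid 2p$ implies $G_{p,q}$ is residually finite, and for this I would invoke the criterion of Kim (\cite{kim} Theorem 3.7) quoted immediately above: a tubular group (more generally a graph of finitely generated free abelian groups with infinite cyclic edge groups) is residually finite provided every edge inclusion sends the generator of the edge $\Z$ to a primitive element of the vertex group it maps into.

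So the key step is to check, under the hypothesis $q\mid 2p$, that after passing to a suitable presentation the four edge inclusions of $G_{p,q}$ all land on primitive elements. Recall the two edges have inclusions $a^q$ and $a^pb$ for one, and $a^q$ and $a^pb^{-1}$ for the other, in the vertex group $\langle a,b\rangle\cong\Z^2$. The images $a^pb$ and $a^pb^{-1}$ are already primitive (the second coordinate is $\pm1$). The potentially non-primitive images are the two copies of $a^q=(q,0)$, which is primitive only when $q=\pm1$. The idea is that when $q\mid 2p$ one can perform an automorphism of the vertex group $\Z^2$ — equivalently a change of basis, which does not alter the isomorphism type of the tubular group — that simultaneously makes all four images primitive. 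Writing $2p=qk$, the substitution $b\mapsto b'=a^{?}b^{?}$ or rather choosing the new basis so that $a^q$ becomes a primitive vector should work: for instance one checks that with $2p=qk$ the vectors $(q,0)$, $(p,1)$, $(p,-1)$ can all be made primitive after an $SL_2(\Z)$ change of coordinates, because $\gcd$ conditions line up precisely when $q\mid 2p$. I would carry this out concretely: set up the three vectors, write the required unimodular matrix explicitly in terms of $p,q$ and $k=2p/q$, and verify each transformed vector has coprime coordinates.

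The main obstacle I expect is exactly this linear-algebra verification: showing that the single divisibility condition $q\mid 2p$ is enough to primitivise \emph{all} the relevant vectors at once, and confirming that the change of basis is genuinely an automorphism of $\Z^2$ (determinant $\pm1$) rather than just a rational change. One has to be a little careful that the same basis change is used consistently at the single vertex for all four inclusions, and that the distinction between $q$ odd and $q$ even (the ``$2$'' in $2p$) is handled — when $q$ is odd, $q\mid 2p$ is equivalent to $q\mid p$, so $a^q$ already sits inside $\langle a^p\rangle$, whereas when $q$ is even one genuinely needs the factor of $2$. Once the primitivity is confirmed in both parity cases, Kim's theorem applies directly to give residual finiteness of $G_{p,q}$, and then $R_{p,q}$ is residually finite too since $G_{p,q}$ has finite index in it. Combining with Theorem \ref{resf} completes the ``if and only if''.

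Finally, I would remark that the residual finiteness of $R_{p,q}$ follows from that of its index $2$ subgroup $G_{p,q}$ by the standard fact that residual finiteness is inherited by finite-index overgroups; this is a one-line addendum rather than a substantive step.
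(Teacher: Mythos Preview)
Your overall strategy---use Theorem \ref{resf} for one direction and Kim's criterion for the other---matches the paper. But the key step you propose for the converse does not work. Primitivity of a vector in $\Z^2$ is invariant under $GL_2(\Z)$: if $v=(q,0)$ with $|q|>1$ then $v=q\cdot(1,0)$, and for any unimodular $M$ we have $Mv=q\cdot M(1,0)$, still a proper multiple. So no automorphism of the vertex group can turn $a^q$ into a primitive element; the ``linear-algebra verification'' you anticipate as the main obstacle is in fact impossible, not merely delicate.

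What the paper does instead is pass to a finite index \emph{subgroup} rather than change basis. When $q\mid p$ one takes the kernel $K$ of the homomorphism $G_{p,q}\to C_q$ sending $a\mapsto 1$ and $b,s,t\mapsto 0$. Then $K$ is again tubular (it acts on the same Bass--Serre tree), but its vertex groups are now the sublattices $\langle a^q,b\rangle$ rather than $\langle a,b\rangle$. In this smaller lattice $a^q$ \emph{is} primitive, and one checks that $a^pb$, $a^pb^{-1}$ (which lie in the sublattice since $q\mid p$) remain primitive there too. Kim's criterion then gives $K$ residually finite, hence $G_{p,q}$. The remaining case $q\nmid p$ but $q\mid 2p$ (so $q=2m$ with $m\mid p$) is handled by the same trick with the homomorphism $a\mapsto 1$, $b\mapsto p$, $s,t\mapsto 0$. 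The point is that shrinking the vertex lattice, unlike applying an automorphism to it, genuinely changes which vectors are primitive.
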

\begin{proof} The elements $a^pb$ and $a^pb^{-1}$ are certainly
primitive in the edge group $\langle a,b\rangle$, so if $q=\pm 1$ then
the above result applies. Otherwise first suppose that $q$ divides $p$. Then
we follow the idea in \cite{metb} Proposition 3.1. We have a homomorphism
from $G_{p,q}$ to the cyclic group $C_q$ given by sending $a$ to 1
and $b,s,t$ to zero. Now $G_{p,q}$ is a tubular group and thus acts on its
Bass - Serre tree $T$, so the kernel $K$ has finite index $q$
in $G_{p,q}$ and also acts on $T$ with $\Z^2$ vertex stabilisers and
$\Z$ edge stabilisers. This gives rise to a decomposition of $K$ as a
tubular group, with edge groups $E\cap K$ and vertex groups $V\cap K$
where $E$ and $V$ are the edge and vertex groups for $G$ on $T$, thus
are conjugates in $G$ of $\langle a,b\rangle$ and $a^q,a^pb,a^pb^{-1}$
respectively. Now $\langle a,b\rangle\cap K=\langle a^q,b\rangle$ and
as $K$ is normal, this will hold for any conjugate of $\langle a,b\rangle$
too. Thus now the edge inclusions of $a^q,a^pb,a^pb^{-1}$ are all primitive 
elements in the vertex groups of $K$.

If $q$ does not divide $p$, so that $q=2m$ is even with $m$ dividing
$p$, we proceed in exactly the same way but now we use the homomorphism
from $G_{p,q}$ to $C_q$ sending $a$ to 1, $b$ to $p$ and $s,t$ to 0.
\end{proof}

We note that the forthcoming paper \cite{hwdws} characterises exactly
when a tubular group is residually finite.

Finally we can also answer Question 20 in \cite{meor}
from 2010, which was put forward to suggest that we knew very little
about how widespread residually finite 2-generator 1-relator groups might
be. It says:\\
\hfill\\
Let $G$ be a group with a 2-generator 1-relator presentation
where the relator is not a proper power. Suppose that $G$ is residually
finite then does $G$
have a finite index subgroup $H$ which is an ascending HNN extension
of a finitely generated free group?\\
\hfill\\
Using the above constructions, we have:
\begin{co} The group $R_{p,q}$ above where $p>q\geq 1$ and $q$ divides
$2p$ is residually finite, but no finite index subgroup of $R_{p,q}$
is an ascending HNN extension of a finitely generated free group.
\end{co}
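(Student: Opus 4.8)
The plan is to combine the two strands already developed in this section: the residual finiteness criterion just established for $R_{p,q}$ when $q$ divides $2p$, together with a structural obstruction showing that none of these groups can be virtually an ascending HNN extension of a finitely generated free group. Since the first half is exactly the preceding Corollary (for $q\ge 1$ dividing $2p$ the group $R_{p,q}$, equivalently $G_{p,q}$, is residually finite), the real content is the second half, so I would concentrate there.

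First I would pass to the tubular index-$2$ subgroup $G_{p,q}$ and, more importantly, observe that a finite index subgroup $H\le_f R_{p,q}$ is an ascending HNN extension of a finitely generated free group only if $H$ is free-by-cyclic, with the cyclic quotient realised by a surjection $H\to\Z$ whose kernel is finitely generated and free; and conversely one always has such a splitting for free-by-(infinite cyclic) groups with finitely generated fibre, so the two notions coincide up to passing to a further finite index subgroup. Thus it suffices to show that no finite index subgroup of $R_{p,q}$ (equivalently of $G_{p,q}$) is free-by-cyclic when $p>q\ge 1$. The natural route is via Dehn functions: the snowflake groups $G_{p,q}$ for $p>q$ have Dehn function strictly greater than quadratic by \cite{brbr}, this property is a quasi-isometry invariant and hence is inherited by every finite index subgroup, whereas a finitely generated free-by-cyclic group has Dehn function at most quadratic (indeed it is either word hyperbolic, with linear Dehn function, or contains $\Z^2$ and one invokes the standard fact — e.g.\ via the exponential or quadratic bound for free-by-cyclic groups, or Bridson--Groves — that its Dehn function is at most quadratic). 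This contradiction rules out any finite index free-by-cyclic subgroup, hence any finite index ascending HNN subgroup over a finitely generated free group.

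The step I expect to be the main obstacle is pinning down precisely the implication ``finite index ascending HNN extension of a finitely generated free group $\Rightarrow$ finite index free-by-cyclic subgroup with Dehn function at most quadratic'', since an ascending (non-surjective) HNN extension $F\ast_\phi$ with $\phi$ a proper injection need not itself be free-by-cyclic — one must use that $R_{p,q}$, being a tubular group extension, contains no unbalanced Baumslag--Solitar subgroup, which forces any such $\phi$ to have image of finite index in $F$, hence (after passing to a further finite index subgroup, or directly by Feighn--Handel / the Bieri--Strebel type argument) $\phi$ is in fact an automorphism and the HNN extension is genuinely free-by-$\Z$. Once that is in hand the Dehn function comparison is routine: superquadratic-versus-at-most-quadratic, both invariant under commensurability, gives the required contradiction, and the Corollary follows by citing the preceding residual finiteness result for the ``positive'' half of the statement.
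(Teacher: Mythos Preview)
Your treatment of the residual finiteness and of the \emph{non-strictly} ascending case (i.e.\ genuine free-by-cyclic finite index subgroups) matches the paper exactly: both of you invoke the superquadratic Dehn function of the snowflake groups together with the Bridson--Groves quadratic bound for $F_n\rtimes\Z$, and quasi-isometry invariance of the Dehn function.

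The gap is in your handling of the \emph{strictly} ascending case. You assert that the absence of unbalanced Baumslag--Solitar subgroups in $R_{p,q}$ ``forces any such $\phi$ to have image of finite index in $F$'', and hence (by the rank argument) to be an automorphism. This implication is not standard and, as stated, is false: there exist strictly ascending HNN extensions $F\ast_\phi$ of finitely generated free groups which contain $\Z^2$ but no $BS(1,n)$ with $|n|\geq 2$ (for instance, take $\phi$ fixing one basis element and acting as a suitably atoroidal injective non-surjective endomorphism on the remaining free factor). So knowing only that $H\leq_f R_{p,q}$ contains $\Z^2$ and no unbalanced $BS$ subgroup does not by itself rule out $H$ being a strictly ascending HNN extension of a free group. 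The paper itself flags exactly this point, noting that the Dehn function argument alone does not exclude the strictly ascending case.

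The paper's route around this is different and cleaner. One first passes from $R_{p,q}$ to its index-$2$ tubular subgroup $G_{p,q}$ (using \cite{melrg} Proposition~4.3(iii) to transfer the hypothetical finite index ascending-HNN subgroup), observes that any finite index subgroup $H$ of $G_{p,q}$ is again tubular, and then invokes Cashen--Levitt \cite{cshlev} Corollary~2.10: a tubular group has \emph{symmetric} BNS invariant $\Sigma^1$, whereas a strictly ascending HNN extension of a finitely generated group has asymmetric $\Sigma^1$. This immediately excludes the strictly ascending possibility without any analysis of Baumslag--Solitar subgroups. If you want to repair your argument, this BNS-invariant step is the missing ingredient.
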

\begin{proof} For these values of $p,q$ we know from above that
$R_{p,q}$ is residually finite but has a Dehn function which is
bigger than quadratic. By \cite{brgr} all free by cyclic groups
$F_n\rtimes_\alpha\Z$ have at most quadratic Dehn function and
this is a quasi isometry invariant, so $R_{p,q}$ has no finite
index subgroup of this form either.

However this does not rule out $R_{p,q}$ having a finite index subgroup
which is is a strictly ascending HNN extension of a finitely generated
free group (as such groups can contain Baumslag - Solitar subgroups
of the form $BS(1,q)$ and so have exponential Dehn function). For this,
we note that the corresponding tubular group $G_{p,q}$ will have
such a finite index subgroup $H$ too, by \cite{melrg} Proposition 4.3
(iii). Now $H$ will also be a tubular group, but \cite{cshlev} Corollary 2.10
states that a tubular group has a symmetric BNS invariant and so
cannot be a strictly ascending HNN extension of any finitely generated
group.
\end{proof}

\Address

\end{document}